\theoremstyle{plain}
\newtheorem{thm}{Theorem}[section]
\newtheorem{lem}[thm]{Lemma}
\newtheorem{cor}[thm]{Corollary}
\numberwithin{equation}{section}
\theoremstyle{definition}
\theoremstyle{remark}
\theoremstyle{plain}
\newcommand{\thmref}[1]{Theorem~\ref{#1}}
\newcommand{\lemref}[1]{Lemma~\ref{#1}}
\newcommand{\corref}[1]{Corollary~\ref{#1}}
\newcommand{\figref}[1]{Figure~\ref{#1}}
\newcommand{\eqnref}[1]{Equation~\eqref{#1}}
\newcommand{\calA}{{\mathcal A}}
\newcommand{\calH}{{\mathcal H}}
\newcommand{\calZ}{{\mathcal Z}}
\newcommand{\CC}{{\mathbb C}}
\newcommand{\DD}{{\mathbb D}}
\newcommand{\HH}{{\mathbb H}}
\newcommand{\RR}{{\mathbb R}}
\newcommand{\wtilde}{\widetilde}
\DeclareMathOperator{\Aff}{Aff}
\DeclareMathOperator{\LIP}{LIP}
\DeclareMathOperator{\LSP}{LSP}
\newcommand{\id}{\mathrm{id}}
\begin{document}

\title{The space of immersed polygons}
\author{Maxime Fortier Bourque}
\address{D\'epartement de math\'ematiques et de statistique, Universit\'e de Montr\'eal, 2920, chemin de la Tour, Montr\'eal (QC), H3T 1J4, Canada}
\email{maxime.fortier.bourque@umontreal.ca}

\begin{abstract}
We use the Schwarz--Christoffel formula to show that for every $n\geq 3$, the space of labelled immersed $n$-gons in the plane up to similarity is homeomorphic to $\RR^{2n-4}$. We then prove that all immersed triangles, quadrilaterals, and pentagons are embedded, from which it follows that the space of labelled simple $n$-gons up to similarity is homeomorphic to $\RR^{2n-4}$ if $n\in \{3,4,5\}$. This was first shown  by Gonz\'ales and L\'opez-L\'opez for $n=4$ and conjectured to be true for every $n\geq 5$ by González and Sedano-Mendoza.
\end{abstract}

%\subjclass{52C30,58D10,30C30}

\maketitle

\section{Introduction}

An \emph{immersed polygon} in the plane is a closed polygonal curve $\phi: S^1 \to \CC$ (considered up to orientation-preserving reparametrization) which extends to an orientation-preserving local homeomorphism $f: \overline{\DD} \to \CC$. An immersed polygon $\phi$ is \emph{labelled} if we choose distinct points $z_1, \ldots, z_n \in S^1$ in counter-clockwise order such that $\phi$ maps each component of $S^1 \setminus \{ z_1 , \ldots, z_n \}$ to the interior of a straight line segment. The images $w_j = \phi(z_j)$ (some of which may coincide) are the \emph{vertices} of the immersed polygon. At each vertex $w_j$, the \emph{interior angle} is the clockwise angle $\angle w_{j-1} w_j w_{j+1} \in (0,2\pi)$, where indices are taken modulo $n$.

\begin{figure}[htp]
\centering
\includegraphics[width=0.4\textwidth]{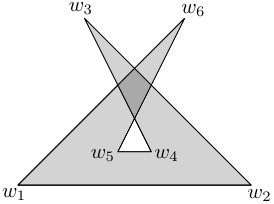}
\caption{An immersed hexagon.} \label{fig:hex}
\end{figure}

Since the vertices of a labelled immersed polygon determine the map $\phi$ up to reparametrization, the set $\LIP(n)$ of labelled immersed $n$-gons can be identified with a subset of $\CC^{n}$, which induces a topology on $\LIP(n)$. With this topology, the group 
\[\Aff(\CC)= \{ z \mapsto az+b : a \in \CC \setminus \{ 0\}, b \in \CC \}\]
of complex affine transformations (or, equivalently, of orientation-preserving similarities of $\CC$) acts on $\LIP(n)$ by homeomorphisms.

The goal of this note is to show that the quotient space $\LIP(n)/\Aff(\CC)$ is homeomorphic to $\RR^{2n-4}$.

\begin{thm}\label{thm:main}
For every $n\geq 3$, the quotient of the space $\LIP(n)$ of labelled immersed $n$-gons in the plane by the action of the group $\Aff(\CC)$ is homeomorphic to $\RR^{2n-4}$.
\end{thm}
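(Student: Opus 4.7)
The plan is to construct an explicit homeomorphism from the Schwarz--Christoffel parameter space, naturally a product of two open simplices homeomorphic to $\RR^{2n-4}$, onto $\LIP(n)/\Aff(\CC)$. Given any labelled immersed polygon $\phi$ with extension $f: \overline{\DD}\to \CC$, I pull back the flat complex structure of $\CC$ via the local homeomorphism $f$ to equip $\overline{\DD}$ with a Riemann surface structure in which $f$ becomes holomorphic. By the Riemann mapping theorem I precompose with a biholomorphism to put this structure in standard form, after which $f$ is a conformal immersion of $\DD$ mapping each arc between consecutive prevertices to a straight segment. A standard analysis of $\arg f'$ using the Schwarz reflection principle then yields
\[
f(z) \;=\; A + C\int_0^z \prod_{j=1}^n (w-z_j)^{\alpha_j-1}\,dw,
\]
where $z_1,\dots,z_n\in S^1$ are the prevertices in counter-clockwise order and $\alpha_j\pi\in(0,2\pi)$ is the interior angle at $w_j = f(z_j)$.

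Next I normalize. Using $\Aff(\CC)$ on the target I set $A=0$ and $C=1$; using $\Aut(\DD)$ on the source I fix $z_1, z_2, z_3$ at the cube roots of unity. The remaining parameters are the $n-3$ prevertices $z_4,\dots,z_n$ arranged in this order on the open counter-clockwise arc from $z_3$ to $z_1$, which form an open simplex of dimension $n-3$, together with the $n$ angle parameters $\alpha_j\in(0,2)$ constrained by the Gauss--Bonnet identity
\[
\sum_{j=1}^n \alpha_j \;=\; n-2,
\]
(forced since $f$ immerses a disk, so the turning number of its boundary curve is $+1$), cutting out an open $(n-1)$-simplex. The product of these two open convex sets is itself an open convex subset of $\RR^{(n-3)+(n-1)}$, hence homeomorphic to $\RR^{2n-4}$.

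This construction defines a map $\Psi$ from $\RR^{2n-4}$ to $\LIP(n)/\Aff(\CC)$ by evaluating the Schwarz--Christoffel integral, and the first paragraph establishes surjectivity. The map is well-defined: the integrand is nonzero on $\DD$, so $f$ is a local biholomorphism in the interior, while near each prevertex $z_j$ the primitive behaves like a constant multiple of $(z-z_j)^{\alpha_j}$, which is a local homeomorphism of a boundary half-disk onto a wedge of opening $\alpha_j\pi\in(0,2\pi)$. Continuity of $\Psi$ follows from continuous dependence of the integral on its parameters, after checking that the integrable singularities at the prevertices are dominated uniformly on compact subsets of the parameter space. Injectivity holds because the interior angles are intrinsic to $\phi$, and the prevertices are uniquely determined once three are pinned: any rival SC representation would differ from $f$ by a M\"obius self-map of $\DD$ fixing three boundary points, hence by the identity.

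The main obstacle will be the continuity of $\Psi^{-1}$, namely that as $\phi$ varies continuously in $\LIP(n)/\Aff(\CC)$ the prevertices obtained from the Riemann map of the pulled-back structure also vary continuously. I would prove this via a Carath\'eodory-type convergence argument: for a convergent sequence $\phi^{(k)}\to\phi$, the pulled-back Riemann surface structures on $\overline{\DD}$ converge, a normal-families argument shows the associated normalized Riemann maps converge locally uniformly on $\DD$, and this forces convergence of the prevertices on $S^1$. Alternatively, one could invoke invariance of domain after verifying that $\Psi$ is a continuous injection between spaces locally modelled on $\RR^{2n-4}$, concluding that $\Psi$ is an open map and hence the desired homeomorphism.
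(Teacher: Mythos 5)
Your proposal follows essentially the same route as the paper: parametrize polygons in $\LIP(n)/\Aff(\CC)$ by normalized Schwarz--Christoffel data (prevertices plus angle exponents), note that this parameter space is a product of convex open sets homeomorphic to $\RR^{2n-4}$, and obtain continuity in both directions from continuous dependence of the integral on its parameters together with a normal-families/uniqueness argument for the inverse; the only cosmetic difference is your disk normalization (three prevertices at cube roots of unity) versus the paper's half-plane normalization $z_1=-1$, $z_2=0$, $z_n=\infty$. One small caveat: since a non-embedded immersed polygon pulls back to an abstract simply connected Riemann surface rather than a planar domain, the standardization step requires the uniformization theorem instead of the Riemann mapping theorem, exactly as the paper remarks.
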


The proof is based on the Schwarz--Christoffel formula in complex ana\-ly\-sis. It shows that $\LIP(n)/\Aff(\CC)$ is a trivial $\RR^{n-1}$-bundle (with fibers parametrized by the interior angles) over the Teichm\"uller space of closed disks with $n$ marked points on the boundary, itself homeomorphic to $\RR^{n-3}$.

We then prove that for $n \in \{3,4,5\}$, all immersed $n$-gons are simple (i.e., embedded), which has the following consequence.

\begin{cor} \label{cor:emb}
For every $n\in \{3,4,5\}$, the quotient of the space $\LSP(n)$ of simple $n$-gons in the plane (with vertices labelled in counter-clockwise order) by the action of the group $\Aff(\CC)$ is homeomorphic to $\RR^{2n-4}$.
\end{cor}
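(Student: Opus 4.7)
The plan is to deduce this corollary from Theorem~\ref{thm:main} via the key observation that, for $n\in\{3,4,5\}$, the inclusion $\LSP(n)\subseteq \LIP(n)$ is actually an equality.

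First I would verify the inclusion $\LSP(n) \subseteq \LIP(n)$ for every $n\geq 3$. Given a labelled simple $n$-gon $\phi \from S^1 \to \CC$, the image is a Jordan curve bounding a topological disk $D\subset \CC$; by the Schoenflies theorem, $\phi$ extends to an orientation-preserving homeomorphism $f\from \overline{\DD} \to \overline{D}$, and any homeomorphism is in particular a local homeomorphism, so $\phi \in \LIP(n)$. The identification $\LSP(n)\subseteq \LIP(n) \subset \CC^n$ endows both sides with compatible subspace topologies, and the action of $\Aff(\CC)$ restricts from one to the other since affine maps preserve both simplicity and (local) homeomorphism.

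The main step is to invoke the (forthcoming) result of the paper that for $n\in\{3,4,5\}$, every immersed $n$-gon is embedded, so that $\LIP(n) = \LSP(n)$ as subsets of $\CC^n$ and hence as $\Aff(\CC)$-spaces. Once this equality is in place, the quotient $\LSP(n)/\Aff(\CC)$ coincides with $\LIP(n)/\Aff(\CC)$, and Theorem~\ref{thm:main} gives the desired homeomorphism with $\RR^{2n-4}$.

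The hard part is therefore not the corollary itself, whose proof is a one-line combination of Theorem~\ref{thm:main} with the equality $\LIP(n)=\LSP(n)$, but the underlying geometric fact that immersed polygons with at most five vertices cannot self-intersect. That classification argument—presumably proceeding by a case analysis on the interior angles produced by the Schwarz--Christoffel parametrization, using that the total winding number of the tangent equals $+1$ and counting possible crossing patterns of the $n$ edges—is what actually carries the weight; I would address it separately and treat the corollary as a formal consequence.
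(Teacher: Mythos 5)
Your proposal is correct and follows the paper's own route: the corollary is exactly the formal combination of Theorem~\ref{thm:main} with the separate fact (Lemma~\ref{lem:immersed_is_embedded} in the paper, proved by a winding-number argument) that every immersed $n$-gon with $n\le 5$ is simple, so that $\LSP(n)=\LIP(n)$ for these $n$. Your additional Schoenflies justification of the inclusion $\LSP(n)\subseteq\LIP(n)$ is a reasonable way to make explicit what the paper leaves implicit.
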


For $n=3$, this result is well-known (see e.g. \cite{Baez}) and elementary. Here is a proof: the map that sends a triangle with counter-clockwise vertices $w_1$, $w_2$, $w_3$ to \[\frac{w_3-w_1}{w_2-w_1}\] induces a homeomorphism from $\LSP(3)/\Aff(\CC)$ to the upper half-plane $\HH$, which is homeomorphic to $\RR^2$ via the map $x+iy \mapsto (x,\log y)$. For $n=4$, the result was first proved in \cite{GonzalezLopez} and then again in \cite{GSM} with different methods. In \cite{GSM}, González and Sedano-Mendoza conjectured that the result holds for every $n\geq 5$ as well. The above corollary settles the case $n=5$ and reproves the previous cases in a unified way. For general $n\geq 3$, it is known that $\LSP(n)/\Aff(\CC)$ is simply-connected \cite[Theorem 3.2]{Gonzalez}. 

\subsection*{Relation to other work}

Various other moduli spaces of planar polygons have been studied in the li\-te\-rature before, such as those of polygons with fixed side lengths (known as \emph{polygonal linkages}) \cite{Hausmann,KapovichMillson,Equilateral,convexifying,ShimamotoWooters} or with fixed interior angles \cite{BavardGhys,ShapesOfStars}. The former moduli spaces can be quite complicated topologically, while the latter can be parametrized by interiors of convex hyperbolic polyhedra (hence are cells) if all interior angles are in $(0,\pi)$.

The works \cite{convexifying} and \cite{ShimamotoWooters} in particular study simple polygons with fixed side lengths. Some of their results can be restated as follows. Given positive real numbers $\ell_1, \ldots, \ell_n$ such that $\ell_{j} < \sum_{k\neq j} \ell_k$ for every $j$, the subset $X(\ell)$ of polygons in $\LSP(n)/\Aff(\CC)$ whose vector of side lengths is a constant multiple of $\ell = (\ell_1, \ldots, \ell_n)$ is contractible \cite[Corollary 3]{convexifying}. If furthermore the $\ell_j$ cannot partitioned into two sets with the same sum, then $X(\ell)$ is homeomorphic to $\RR^{n-3}$ \cite[Theorem 7]{ShimamotoWooters}. If the additionnal hypothesis on subsets with equal sums could be removed in this result of Shimamoto--Wooters, then the conjecture of González and Sedano-Mendoza would follow, since the set of realizable vectors of side lengths up to scaling is homeomorphic to $\RR^{n-1}$, and a fiber bundle over a contractible base is always trivial.

This is all to say that the statement of \corref{cor:emb} is likely to be true for every $n\geq 3$ even though that cannot be deduced from \thmref{thm:main}.

\section{Schwarz--Christoffel tranformations}

We will work with the closed upper half-plane \[\overline{\HH} = \HH \cup \RR \cup \{ \infty \}\] instead of the closed unit disk. 

Let $n\geq 3$. Given prevertices $z_1 < z_2 < \cdots < z_{n-1}$ in $\RR$ and $z_n = \infty$, parameters $\alpha_1, \ldots, \alpha_n \in (0,2)$ such that
\begin{equation} \label{eqn:angles}
\sum_{j=1}^n \alpha_j = n-2,
\end{equation}
 and constants $A\in \CC\setminus\{0\}$, $B \in \CC$, the corresponding \emph{Schwarz--Christoffel transformation} $F: \overline{\HH} \to \CC$ is defined by the formula
\begin{equation}
F(z):= A \int_{[i,z]} \prod_{j=1}^{n-1}(\zeta - z_j)^{\alpha_j - 1} \, d\zeta + B,
\end{equation}
where $[i,z]$ denotes the straight line segment from the imaginary unit $i$ to $z$, and the powers $(\zeta - z_j)^{\alpha_j - 1}:= \exp((\alpha_j - 1)L(\zeta - z_j))$ are defined using the branch $L$ of the logarithm in $\overline{\HH}\setminus\{0\}$ that coincides with the usual logarithm on $(0,\infty)$. The main result concerning these maps can be summarized as follows \cite{Christoffel,Schwarz}.

\begin{thm}[Schwarz, Christoffel] \label{thm:SC}
For any choice of parameters as above, the Schwarz--Christof\-fel transformation $F$ is a local homeomorphism, holomorphic in $\HH$, whose restriction to $\RR \cup \{ \infty \}$ is an immersed $n$-gon in $\CC$ with interior angle $\alpha_j \pi$ at vertex $F(z_j)$ for every $j \in \{ 1, \ldots, n \}$. The resulting polygon is simple if and only if $F$ is injective. Conversely, any immersed $n$-gon in $\CC$ arises as the restriction to $\RR \cup \{ \infty \}$ of a Schwarz--Christoffel transformation, unique up to pre-composition by an affine map $z \mapsto az+b$ with $a>0$ and $b \in \RR$.
\end{thm}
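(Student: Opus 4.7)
My plan is to prove the theorem in three stages: first show that the map $F$ given by the formula has all the claimed properties, second identify injectivity with simplicity, and third deduce the converse by uniformization together with a Liouville-type argument that isolates the Schwarz--Christoffel form.

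For the forward direction, each factor $(\zeta-z_j)^{\alpha_j-1}$ is holomorphic and nowhere vanishing on $\HH$, so the integrand is nonvanishing holomorphic and $F$ is a local biholomorphism on $\HH$. On each open boundary interval between consecutive prevertices, every factor $(\zeta-z_k)^{\alpha_k-1}$ has constant argument, so $\arg F'$ is constant on the interval and $F$ carries it into a straight line in $\CC$. Near each $z_j$ one writes $F'(z)=(z-z_j)^{\alpha_j-1}H_j(z)$ with $H_j$ nonvanishing holomorphic; integrating gives $F(z)-F(z_j)=(z-z_j)^{\alpha_j}G_j(z)$ with $G_j(z_j)\neq 0$, so $F$ is locally a homeomorphism carrying a boundary half-disk at $z_j$ onto a sector of opening angle $\alpha_j\pi$. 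The condition $\sum_j\alpha_j=n-2$ forces $F'(z)$ to decay like $|z|^{-1-\alpha_n}$ at infinity, producing the analogous power behaviour near $z_n=\infty$ with interior angle $\alpha_n\pi$. Piecing these local descriptions together, $F$ is a local homeomorphism on all of $\overline{\HH}$ whose boundary restriction is a labelled immersed $n$-gon with the prescribed angles. Injectivity of $F$ obviously implies simplicity of the polygon; conversely, if the polygon is a Jordan curve $\Gamma$ bounding a Jordan domain $\Omega$, a standard degree argument (the winding number of $F|_{\partial}$ around any $p\in\Omega$ equals the signed preimage count in $\HH$, and orientation-preservation forces all signs to be $+1$) shows $F$ maps $\HH$ homeomorphically onto $\Omega$, hence is injective.

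For the converse, given an immersed $n$-gon $\phi$ extending to a local homeomorphism $f:\overline{\DD}\to\CC$, I pull back the standard complex structure of $\CC$ via $f$ to equip $\overline{\DD}$ with a bordered Riemann surface structure, and apply the Riemann mapping theorem (with Carath\'eodory boundary correspondence and Schwarz reflection across the smooth parts of $\partial\DD$ for regularity) to obtain a conformal homeomorphism $u:\overline{\HH}\to\overline{\DD}$. The composition $F:=f\circ u$ is then holomorphic on $\HH$, continuous on $\overline{\HH}$, and a local homeomorphism. After normalizing by a map $z\mapsto az+b$ with $a>0$ and $b\in\RR$ so that the preimage of $w_n$ is $\infty$, the preimages $z_1<\cdots<z_{n-1}$ of $w_1,\ldots,w_{n-1}$ lie in $\RR$. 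Let $\alpha_j\pi\in(0,2\pi)$ be the interior angles and set
\[ h(z):=\frac{F'(z)}{\prod_{j=1}^{n-1}(z-z_j)^{\alpha_j-1}}. \]
The argument-jump analysis from the forward direction shows that $\arg F'$ and $\arg\prod_j(z-z_j)^{\alpha_j-1}$ have the same jumps at each $z_j$, so $\arg h$ is a common constant $\theta$ on all of $\RR\setminus\{z_1,\ldots,z_{n-1}\}$. Applying Schwarz reflection to $e^{-i\theta}h$ extends $h$ meromorphically to $\CC\setminus\{z_1,\ldots,z_{n-1}\}$; the local form $F(z)-F(z_j)\sim C_j(z-z_j)^{\alpha_j}$ makes each $z_j$ a removable singularity; and the angle-sum $\sum\alpha_j=n-2$ (encoding the total turning $2\pi$ of $\phi$) makes $h$ bounded at $\infty$. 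Liouville's theorem then gives $h\equiv A$, and integrating with $B=F(i)$ recovers the Schwarz--Christoffel formula. Uniqueness up to $z\mapsto az+b$ with $a>0$ and $b\in\RR$ follows because these are the only conformal automorphisms of $\overline{\HH}$ fixing $\infty$.

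The most delicate step is the converse: one must ensure that the pulled-back complex structure on $\overline{\DD}$ really makes it a bordered Riemann surface conformally equivalent to $\overline{\HH}$ up to the boundary, and that $F=f\circ u$ has the precise local power behaviour at each vertex. This boundary regularity is what licenses Schwarz reflection, the removability of the singularities of $h$, and the control at infinity needed to apply Liouville's theorem.
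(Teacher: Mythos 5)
Your outline follows the classical Schwarz--Christoffel argument, which is exactly what the paper does: it does not reprove the theorem but cites Schwarz and Christoffel, adding only one substantive remark about how the usual proof must be adapted to immersed (rather than simple) polygons. Your forward direction (nonvanishing integrand, constant $\arg F'$ on boundary intervals, the local factorization $F(z)-F(z_j)=(z-z_j)^{\alpha_j}G_j(z)$, the behaviour at $\infty$) and your converse via reflection of $h=F'/\prod_j(z-z_j)^{\alpha_j-1}$ and Liouville are the standard proof and are fine as a sketch, as is the degree argument identifying simplicity with injectivity.

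The one point where your write-up glosses over precisely the issue the paper flags is the start of the converse. For an immersed polygon, $f:\overline{\DD}\to\CC$ is only a local homeomorphism, not injective, so after pulling back the complex structure you have an \emph{abstract} simply connected bordered Riemann surface, not a Jordan domain in the plane; the Riemann mapping theorem in its usual form (for proper simply connected plane domains, with Carath\'eodory boundary correspondence) is not literally applicable. What you need is the uniformization theorem: the interior of the pulled-back surface is a simply connected Riemann surface carrying the bounded nonconstant holomorphic function $f$ itself, so its conformal type is the disk and not $\CC$, and boundary regularity then comes from the fact that $F=f\circ u$ maps boundary arcs into straight segments (reflection applied to $F$, as you do later for $h$), rather than from Carath\'eodory's theorem for plane domains. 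This is exactly the ``minor difference'' the paper points out --- the Riemann mapping theorem must be replaced by uniformization --- and with that substitution your argument goes through; the rest of your converse (local power behaviour at the prevertices, removability, boundedness of $h$ at $\infty$, uniqueness up to $z\mapsto az+b$ with $a>0$, $b\in\RR$) matches the classical proof the paper invokes.
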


While the last part of the theorem is usually stated for simple polygons, essentially the same proof works in the immersed case. The only minor difference is that one needs to use the uniformization theorem instead of the Riemann mapping theorem (the extension $f$ of an immersed polygon $\phi$ can be made holomorphic by pulling back the complex structure from the complex plane). Christoffel \cite{Christoffel2} even proved a version of the theorem for branched covers, in which case there are more terms in the integrand to account for the branch points. See \cite{DriscollTrefethen} and the references therein for various generalizations of the Schwarz--Christoffel formula.

We will need another well-known fact, namely, that the map $F$ and the resulting immersed polygon depend continuously on the parameters $z_1,\ldots,z_{n-1}$, $\alpha_1, \ldots, \alpha_n$, and $A$,$B$. This is because away from the prevertices, the integrand depends uniformly continuously on the parameters, and the contribution near the prevertices can be controlled locally uniformly.

We will say that a Schwarz--Christoffel transformation $F$ is \emph{normalized} if its prevertices satisfy $z_1 = -1$ and $z_2 = 0$ in addition to $z_n = \infty$, which determines $F$ uniquely for a given immersed polygon. It is known that the normalized  Schwarz--Christoffel transformation $F$ and its remaining para\-me\-ters depend continuously on the immersed polygon, just like normalized Riemann maps depend continuously on the domain. This can be shown u\-sing a normal families argument and the uniqueness of the normalized map.

Given an immersed polygon $\phi$, finding the correct parameters for the corresponding normalized Schwarz--Christoffel transformation is known as the \emph{parameter problem}, which has been studied extensively (see e.g. \cite{DriscollTrefethen}).

\section{The homeomorphism}

We now use the above properties of Schwarz--Christoffel transformations to prove \thmref{thm:main}. Let
\[
\calZ := \left\{ (z_3,\ldots,z_{n-1}) \in \RR^{n-3} : 0 < z_3 < \cdots < z_{n-1}  \right\}
\]
and
\[
\calA := \left\{ (\alpha_1, \ldots , \alpha_n) \in (0,2)^n : \sum_{j=1}^n \alpha_j = n-2 \right\}.
\]
We define a map 
\[
\Phi : \calZ \times \calA \to \LIP(n)
\] as follows. Given $(z_3,\ldots,z_{n-1}) \in \calZ$ and $(\alpha_1, \ldots , \alpha_n)\in \calA$, we add the prevertices $z_1=-1$, $z_2=0$, $z_n = \infty$, and let $F$ be the corresponding Schwarz--Christoffel transformation with remaining parameters $A=1$ and $B=0$. We then set 
\[
\Phi((z_3,\ldots,z_{n-1}),(\alpha_1, \ldots , \alpha_n)):= F|_{\RR \cup \{\infty \}}.
\]
By the continuous dependence of Schwarz--Christoffel transformations upon parameters, the map $\Phi$ is continuous.

We can also define a map
\[
\Psi : \LIP(n) \to \calZ \times \calA
\]
in the other direction as follows. For an immersed $n$-gon $\phi \in \LIP(n)$, let 
\[ z_1=-1 < z_2 = 0 < z_3 < \cdots < z_{n-1} < z_n = \infty, \]
$\alpha_1, \ldots, \alpha_n \in (0,2)$, and $A\in \CC\setminus\{0\}$, $B \in \CC$ be the unique parameters such that the boundary values of the corresponding Schwarz--Christoffel transformation coincide with $\phi$ up to reparametrization. We then set
\[
\Psi(\phi) = ((z_3,\ldots,z_{n-1}),(\alpha_1,\ldots,\alpha_n)),
\]
which is continuous.

By definition, it is clear that $\Psi \circ \Phi = \id$. However, $\Phi \circ \Psi$ might not give back the original immersed polygon because of our specific choice of $A$ and $B$. On the other hand, this problem goes away if we quotient out by $\Aff(\CC)$.

Indeed, observe that if we post-compose an immersed polygon $\phi$ with an orientation-preserving similarity $h$, then the normalized Schwarz-Christoffel transformation for $h\circ \phi$ is $h \circ F$, which has the same parameters as $F$ except for the constants $A$ and $B$. In other words, $\Psi(h\circ \phi)=\Psi(\phi)$, so $\Psi$ descends to a function (which we denote by $\wtilde{\Psi}$) on the quotient space $\LIP(n)/\Aff(\CC)$. Let us also denote by $\wtilde{\Phi}$ the map $\Phi$ post-composed with the quotient map $\LIP(n) \to \LIP(n)/\Aff(\CC)$.

We evidently still have $\wtilde{\Psi} \circ \wtilde{\Phi} = \id$ and we now have $\wtilde{\Phi}\circ \wtilde{\Psi} = \id$ as well. Indeed, if $\phi \in \LIP(n)$ is the restriction of a normalized Schwarz--Christoffel transformation $F$ with constants $A$ and $B$ to $\RR\cup \{ \infty \}$, then by definition
\[
\Phi(\Psi(\phi)) = (A^{-1}(F-B))|_{\RR \cup \{ \infty \}} = A^{-1}(\phi-B) \in \Aff(\CC) \circ \phi
\]
and hence $\wtilde{\Phi}(\wtilde{\Psi}([\phi])) = \wtilde{\Phi}(\Psi(\phi)) = [\phi]$ in $\LIP(n)/\Aff(\CC)$.

We have established that $\wtilde{\Psi} : \LIP(n)/\Aff(\CC) \to \calZ \times \calA$ is a continuous bijection with continuous inverse. It only remains to show that $\calZ \times \calA$ is homeomorphic to $\RR^{2n-4}$, which is elementary.

Firstly, the map
\[
\begin{array}{rcl}
\calZ & \to & \RR^{n-3} \\
(z_3,z_4,\ldots,z_{n-1}) & \mapsto & \left( \log(z_3), \log(z_4-z_3), \ldots, \log(z_{n-1}-z_{n-2}) \right)
\end{array}
\]
is clearly a homeomorphism.

Secondly, observe that $\calA$ is a convex relatively open set in the hyperplane $\calH \subset \RR^n$ defined by the equation $\sum_{j=1}^n \alpha_j = n-2$. We have that $\calH \cong \RR^{n-1}$ and it is well-known that any convex open set $U$ in $\RR^m$ is homeomorphic to $\RR^m$ itself. Indeed, one may define a form of polar coordinates with respect to any point in $U$. It follows that $\calA$ is homeomorphic to $\RR^{n-1}$, and hence
\[
\calZ \times \calA \cong \RR^{n-3} \times \RR^{n-1} = \RR^{2n-4}.
\]
This completes the proof of \thmref{thm:main}.

\section{Embedded polygons}

In this section, we show that when $n$ is at most $5$, an immersed $n$-gon is automatically embedded.

\begin{lem} \label{lem:immersed_is_embedded}
If $3 \leq n \leq 5$, then any immersed $n$-gon in the plane is simple.
\end{lem}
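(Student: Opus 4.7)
My plan is to equip $\overline{\DD}$ with the flat metric pulled back from $\CC$ via the orientation-preserving local homeomorphism $f: \overline{\DD} \to \CC$ extending $\phi$. This makes $\overline{\DD}$ a compact flat disk with piecewise-geodesic boundary having $n$ corners of interior angles $\alpha_j \pi$, where $\alpha_j \in (0,2)$ and $\sum_j \alpha_j = n-2$, and the polygon $\phi$ is simple exactly when $f$ is injective. Suppose for contradiction that $f$ is not injective. By local injectivity of $f$ on the compact space $\overline{\DD}$, there is an $\epsilon > 0$ such that $f$ is injective on every metric ball of radius $\epsilon$; consequently the closed set $\{(p,q) \in \overline{\DD}^2 : f(p) = f(q),\, d(p,q) \geq \epsilon\}$ is nonempty, and we may pick a pair $(p,q)$ in it minimizing the flat distance $d(p,q)$. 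Let $\gamma$ be a length-minimizing geodesic from $p$ to $q$: it is a concatenation of straight segments bending only at a sequence of distinct \emph{reflex} corners $v_{j_1},\ldots,v_{j_m}$ (those with $\alpha_{j_k} > 1$), with signed bending angles $\beta_k$ of modulus at most $(\alpha_{j_k}-1)\pi$ (since a shortest path never bends at a convex corner and cannot revisit a corner).

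The developed image $f\circ\gamma$ is a closed polygonal loop in $\CC$ with $m+1$ vertices. Any interior self-intersection of $f\circ\gamma$ would yield two distinct points of $\overline{\DD}$ with common $f$-image at flat distance strictly less than $d(p,q)$, violating minimality; so $f\circ\gamma$ meets itself only at the basepoint $f(p)=f(q)$. If $m=0$, then $f\circ\gamma$ is a segment of length zero, forcing $p = q$, a contradiction. If $m=1$, the two legs of $\gamma$ develop to the unique straight segment between $f(p)$ and $f(v_{j_1})$, so the midpoints of these legs---two distinct points of $\overline{\DD}$---share an $f$-image and lie at distance at most $d(p,q)/2$, contradicting minimality. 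If $m\geq 2$, then $f\circ\gamma$ is an embedded closed polygon with at least three vertices, so its rotation number equals $\pm 1$; its turning angles at $f(v_{j_k})$ equal $\beta_k$, and its turning angle $\tau$ at the basepoint lies in $(-\pi,\pi)$, so the relation $\beta_1 + \cdots + \beta_m + \tau = \pm 2\pi$ yields $|\beta_1 + \cdots + \beta_m| > \pi$.

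On the other hand, the \emph{reflex budget} $B := \sum_{j:\alpha_j>1}(\alpha_j-1)\pi$ is strictly less than $\pi$ whenever $n\leq 5$. Letting $r$ denote the number of reflex corners and $S := \sum_{j:\alpha_j>1}\alpha_j$, one has $B = (S-r)\pi$. The positivity of all non-reflex angles yields $S < n-2$, hence $r \leq n-3 \leq 2$, while $\alpha_j < 2$ at each reflex $j$ gives $S < 2r$. Going through the cases $r \in \{0,1,2\}$: for $r=0$ one has $B=0$; for $r=1$ one has $B = (\alpha_{j_1}-1)\pi < \pi$; and for $r=2$ (which forces $n=5$) one has $S < 3$, so $B = (S-2)\pi < \pi$. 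Therefore $|\beta_1 + \cdots + \beta_m| \leq \sum_k |\beta_k| \leq B < \pi$, contradicting the inequality obtained in the $m\geq 2$ case and completing the proof that $f$ is injective.

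The main technical obstacle I expect is the case in which $p$ or $q$ lies on $\partial\overline{\DD}$ and a minimizing geodesic $\gamma$ partially traverses the boundary: this introduces bending at convex corners encountered along the boundary, which is not controlled by the reflex-budget argument above. Handling this will require either an approximation step showing that the minimizing pair may be taken in the interior, or a refined analysis using the global identity $\sum_j \alpha_j = n-2$ to bound the combined turning along boundary-hugging portions of $\gamma$.
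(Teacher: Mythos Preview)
Your argument is correct and takes a genuinely different route from the paper's. Your worry about boundary-hugging is unnecessary: in a simply-connected flat disk with piecewise-geodesic boundary, a length-minimizing arc can never bend at a convex corner (interior angle $<\pi$), because any path entering and leaving such a corner subtends an angle strictly less than $\pi$ there and hence admits a shortcut through the interior; consequently all bending of $\gamma$ occurs at reflex corners exactly as your reflex-budget estimate requires, regardless of where $p$ and $q$ lie. The paper's proof is entirely extrinsic: it perturbs the polygon so that all self-intersections are transverse and away from vertices, chooses a point about which the winding number of the boundary curve is at least $2$, observes that each side contributes strictly less than $\tfrac12$ to this winding number (forcing $n\geq 5$), and then for $n=5$ sums the apex angles of the triangles formed by that point and each side to obtain a total of at least $4\pi$, whereas the same sum equals $\sum_j(\pi-\theta_j)=2\pi$, a contradiction. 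Both proofs ultimately rest on the identity $\sum_j(1-\alpha_j)=2$: the paper reads it as the total turning of $\phi$, while you read it as the bound $B=\sum_{\alpha_j>1}(\alpha_j-1)\pi<\pi$. The paper's approach is shorter and needs nothing about geodesics on abstract flat surfaces; yours isolates an intrinsic obstruction and would adapt more readily to related settings (for instance, locally CAT$(0)$ disks or branched flat structures with controlled cone excess).
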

\begin{proof}
Let $\phi$ be an immersed $n$-gon which is not simple. We can perturb $\phi$ to a nearby non-simple immersed polygon $\psi$ where all self-intersections happen away from the vertices and are transverse. Since $\psi$ is not simple, there is some point $p$ in the complement of its image such that the winding number of $\psi$ around $p$ is at least $2$. Indeed, winding numbers are non-negative for immersed polygons and the winding number always takes 3 distinct values around a transverse intersection point. We can further assume that $p$ does not lie
on any of the lines that contain the sides of $\psi$.

Observe that each side of $\psi$ contributes strictly less than $1/2$ to the win\-ding number around $p$ because it spans an angle strictly less than $\pi$ as seen from $p$. It follows that $n > 4$.  

Suppose that $n=5$. Then each side of $\psi$ must be oriented to the left with respect to $p$, otherwise the winding number would be strictly less than $2=4\cdot \frac12$ due to the negative contributions. Let us form triangles with $p$ and each side of $\psi$. Then the sum of the angles $\gamma_k$ of these triangles at the point $p$ is equal to $2\pi k$ where $k\geq 2$ is the winding number of $\psi$ around $p$. On the other hand, by adding all the interior angles of these triangles, we find that
\[
\sum_{k=1}^n \gamma_k = \sum_{j=1}^n \pi - \theta_j
\]
where the $\theta_j$ are the interior angles of the polygon $\psi$. This last sum is the sum of the counter-clockwise turning angles of $\psi$ at the vertices, which always equals $2\pi$ for an immersed polygon. Indeed, the sum of the interior angles of an immersed polygon is $(n-2)\pi$ by \eqnref{eqn:angles} and \thmref{thm:SC}. This is a contradiction, from which we conclude that $n \geq 6$.
\end{proof}

Observe that the statement of the lemma is false for $n=6$, as illustrated in \figref{fig:hex}. Also note that the Schwarz--Christoffel formula makes sense if some of the ``interior angles'' $\theta_j = \alpha_j \pi$ are allowed to be larger than or equal to $2\pi$, but in that case the resulting map is no longer locally injective at the corresponding prevertices. \figref{fig:pentagon} shows that the analogue of \lemref{lem:immersed_is_embedded} for such maps fails for $n=5$, so the restriction $\alpha_j \in (0,2)$ is crucial for our purposes. The part of the proof that breaks down without this restriction is that at a vertex $w_j$ with ``interior angle'' $\theta_j > 2\pi$, the turning angle is $3\pi - \theta_j$ rather than $\pi - \theta_j$.

\begin{figure}[htp]
\centering
\includegraphics[width=0.4\textwidth]{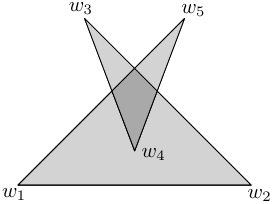}
\caption{A pentagon which fails to be immersed at one vertex.} \label{fig:pentagon}
\end{figure}

We conclude by observing that \corref{cor:emb} follows immediately from \thmref{thm:main} and \lemref{lem:immersed_is_embedded}.

\bibliographystyle{amsalpha}
\bibliography{biblio}

\end{document}